\theoremstyle{definition}
\theoremstyle{remark}
\numberwithin{equation}{section}
\newcommand\blfootnote[1]{%
  \begingroup
  \renewcommand\thefootnote{}\footnote{#1}%
  \addtocounter{footnote}{-1}%
  \endgroup
}
\begin{document}

\title[]{Distinct Orders Dividing Each Other on Both Sides}
\author{Garrett Ervin}

\begin{abstract}
We construct non-isomorphic linear orders $X$ and $Y$ that are both left-hand and right-hand divisors of one another, answering positively a question of Sierpi\'nski. 
\end{abstract}

\maketitle

\section{Introduction}

\blfootnote{\emph{2010 Mathematics Subject Classification}: 06A05, 03E05.}
\blfootnote{\emph{Key words and phrases:} linear order, lexicographical product, Schroeder-Bernstein problem}

In his book \emph{Cardinal and Ordinal Numbers} \cite{Sierpinski}, Sierpi\'nski posed five questions concerning the class $(LO, \times)$ of linear orders under the lexicographical product. 
\begin{enumerate}
\item[Q1.] Does there exist a linear order $X$ that is isomorphic to its cube but not to its square?
\item[Q2.] Do there exist non-isomorphic linear orders $X$ and $Y$ that are left-hand and right-hand divisors of each other? 
\item[Q3.] Do there exist non-isomorphic orders $X$ and $Y$ whose cubes are isomorphic but squares are not? 
\item[Q4.] Do there exist non-isomorphic orders $X$ and $Y$ whose squares are isomorphic but cubes are not? 
\item[Q5.] Do there exist \emph{countable} non-isomorphic orders $X$ and $Y$ that are right-hand divisors of each other? 
\end{enumerate}

Though analogous questions have since been answered for many other classes of structures, these problems had all remained open until the author solved the first problem negatively in \cite{Ervin}. A negative answer to the fifth problem follows quickly from the work in that paper. The main result of this paper is a positive solution to the second problem. 

Before presenting the construction, we motivate these problems and give some historical background. Let $(K, \times)$ be a class of structures equipped with an associative product. Distinguishing the structures in $K$ only up to isomorphism, we may view $(K, \times)$ as a semigroup, and ask whether this semigroup possesses any familiar properties, such as cancellation ($A \times X \cong B \times X \Longrightarrow A \cong B$) or unique square roots ($X^2 \cong Y^2 \Longrightarrow X \cong Y$). 

While many natural classes of finite structures satisfy cancellation (see \cite{Lovasz}), classes that contain infinite structures frequently do not. For example, in any class $K$ that is closed under countable direct sums, there will exist infinite structures $A$ and $X$ such that $A \times X \cong X$. In many classes it is even possible to find infinite structures $X$ that are isomorphic to their own squares. 

Even when cancellation fails, one may ask if $(K, \times)$ satisfies other less stringent regularity properties. Two such properties that were considered historically are the \emph{Kaplansky test properties}. The first of these, sometimes individually referred to as the \emph{Schroeder-Bernstein property}, asserts that whenever two structures $X, Y \in K$ are each isomorphic to a divisor of the other, then $X$ and $Y$ must themselves be isomorphic. That is, if $X \cong A \times Y$ and $Y \cong B \times X$ for some $A, B \in K$, then $X \cong Y$. The second, the \emph{unique square root property}, asserts that for all $X, Y \in K$, if $X^2 \cong Y^2$ then $X \cong Y$. 

Kaplansky himself was interested in determining whether these properties held for various classes of infinite abelian groups. He considered their failure a heuristic indication that a given class admitted no useful structure theorem, a la the classification theorem for finitely generated abelian groups. In his book \cite{Kaplansky} in which he recorded them, he said ``I believe their defeat is convincing evidence that no reasonable invariants exist." The two properties had been considered previously for other classes of structures by several authors, including Tarski \cite{Tarski} and Hanf \cite{Hanf}. 

A third property, related to the two Kaplansky properties, is the \emph{cube property}. Whenever a structure $X$ from a class $K$ is isomorphic to its own square $X^2$, it is isomorphic to all of its finite powers $X^n$. In particular, it is isomorphic to its cube. The cube property asserts that the converse holds: for all $X \in K$, if $X \cong X^3$ then it is already the case that $X \cong X^2$. If $K$ contains a counterexample to the cube property, namely, a structure $X$ that is isomorphic to its cube but not to its square, then the pair $X$ and $Y = X^2$ witnesses the failure of both Kaplansky properties for $K$. 

The cube property holds trivially if $K$ contains no infinite structure isomorphic to its cube. Usually when $K$ does contain such structures, it is possible to find one that is not isomorphic to its square. The first example was produced by Hanf, who constructed in \cite{Hanf} a Boolean algebra isomorphic to its direct cube but not to its square. Subsequent to his result, the cube property has been shown to fail for a variety of algebraic, relational, and topological classes of structures. See the introduction to \cite{Ervin} for a detailed list. 

Counterexamples to the cube property can be recast in the language of representing semigroups. If $(S, \cdot)$ is a semigroup, then $S$ is \emph{represented in $K$} if there is a map $i: S \rightarrow K$ such that for all $x, y \in S$, we have $i(x \cdot y) \cong i(x) \times i(y)$, and if $x \neq y$ then $i(x) \not\cong i(y)$. The failure of the cube property for $K$ is equivalent to the statement that the group $\mathbb{Z}_2$ can be represented in $K$. More generally, $\mathbb{Z}_n$ can be represented in $K$ if and only if there is a structure $X \in K$ that is isomorphic to $X^{n+1}$ but whose lower powers $X, X^2, \ldots, X^n$ are pairwise non-isomorphic. 

If the cube property fails for $K$, then usually not only $\mathbb{Z}_2$ but every finite cyclic group can be represented in $K$. In some cases, more spectacular representation results hold. Ketonen showed in \cite{Ketonen} that every countable commutative semigroup can be represented in the class $(BA, \times)$ of countable Boolean algebras under the direct product. Trnkov\'a and Koubek showed in \cite{Trnkova} that every commutative semigroup can be represented in the class $(G, \times)$ of graphs, where the product can be taken to be the categorical product, strong product, or Cartesian product. It follows that these classes satisfy no general laws that cannot be derived solely from the associativity and commutativity of their products. 

If $X$ and $Y$ are linear orders, their lexicographical product $X \times Y$ is the order obtained by replacing every point in $X$ with a copy of $Y$. The lexicographical product is somewhat peculiar among the many natural products appearing in mathematics in that it is not commutative. For non-commutative products, ``asymmetric" properties like cancellation and the Schroeder-Bernstein property have both left-sided and right-sided versions. 

Both left and right cancellation fail for general linear orders. Indeed for any fixed order $A$, there are many examples of orders $X$ and $Y$ such that $A \times X \cong X$ and $Y \times A \cong Y$. Moreover, there are orders $X$ of any infinite cardinality such that $X \cong X^2$. Certain special subclasses of orders, however, do possess cancellation laws. See \cite{Morel}. 

The left-sided and right-sided versions of the Schroeder-Bernstein property also fail for $(LO, \times)$. Sierpi\'nski himself was aware of counterexamples when he wrote \emph{Cardinal and Ordinal Numbers}. That is, he knew of non-isomorphic orders $X$ and $Y$ of the form $X \cong Y \times A$ and $Y \cong X \times B$, and of non-isomorphic orders $X'$ and $Y'$ of the form $X' \cong C \times Y'$ and $Y' \cong D \times X'$, though in the latter case he knew of only uncountable examples. (Here, $X$ and $Y$ are said to be \emph{left-hand divisors} of one another, and $X'$ and $Y'$ are \emph{right-hand divisors} of one another.) He was also aware of A.C. Davis's counterexamples to the unique square root property, which he, along with Davis, generalized in \cite{Davis}. All of their examples were pairs of non-isomorphic orders $X, Y$ with the property that not only $X^2 \cong Y^2$ but actually $X^n \cong Y^n$ for all $n > 1$. 

But while Sierpi\'nski was thus able to witness the failure of both Kaplansky properties for $(LO, \times)$, he was not able to adapt the constructions to prove the existence of some seemingly ``near by" counterexamples. He knew of no counterexample to the cube property [Q1], nor of a single pair of orders witnessing the failure of the left-sided and right-sided Schroeder-Bernstein properties simultaneously [Q2]. And he did not know if the ``collapse of powers" seen in Davis's counterexamples to the unique square root property is a necessary phenomenon [Q3] [Q4]. He wrote on page 232 of \emph{Cardinal and Ordinal Numbers}, ``We do not know so far any example of two [linear order] types $\varphi$ and $\psi$, such that $\varphi^2 = \psi^2$ but $\varphi^3 \neq \psi^3$ [Q4], or of types $\gamma$ and $\delta$ such that $\gamma^2 \neq \delta^2$ but $\gamma^3 = \delta^3$ [Q3]. Neither do we know of any type $\alpha$ such that $\alpha^2 \neq \alpha^3 = \alpha$ [Q1]." Then, on page 251, ``We do not know\ldots whether there exist two different order types which are both left-hand and right-hand divisors of each other [Q2]." 

A linear order $X$ of the desired type $\alpha$ is a counterexample to the cube property for $(LO, \times)$. If such an $X$ exists, then $X$ (which is isomorphic to $X^3$) and $X^2$ are non-isomorphic orders that are both left-hand and right-hand divisors of each other, yielding a positive answer to Q2 as well. As mentioned already, such an order gives a representation of $\mathbb{Z}_2$ in $(LO, \times)$. If it were also possible to represent $\mathbb{Z}_6$ and $\mathbb{Z}_4$ in $(LO, \times)$, then orders of the desired types $\varphi$ and $\psi$ would exist, as would orders of the desired types $\gamma$ and $\delta$, since elements satisfying these identities exist in these groups.

It turns out, however, there is no such order type $\alpha$: $(LO, \times)$ is one of the rare classes for which the cube property holds, as shown in \cite{Ervin}. It is the unique example, to the author's knowledge, of a natural class of structures for which the cube property holds but Schroeder-Bernstein properties fail. It was proved in \cite{Ervin} that more generally no finite cyclic group is represented in $(LO, \times)$. Hence the easy solutions to Q2, Q3, and Q4 described in the previous paragraph are not available. 

Despite this, we will show that Q2 does have a positive answer, by way of the following theorem. 

\theoremstyle{definition}
\newtheorem*{mt}{Main Theorem}
\begin{mt}
Let $\omega$ denote the natural numbers in their usual order, and let $A = \omega_1^* + \omega_1$ be the ordered sum of the first uncountable ordinal and its reverse. There exist non-isomorphic orders $X$ and $Y$ such that $X \cong A \times Y \cong Y \times \omega$ and $Y \cong A \times X \cong X \times \omega$. 
\end{mt}

This appears as Theorem \ref{sbbothsides} below. To the author's knowledge, both questions Q3 and Q4 remain open. The more general question of precisely what semigroups are represented in $(LO, \times)$ is wide open.

In the next section, we recall the relevant terminology and notation, and review some previous results that we will need for our construction. In the third section, we prove the main theorem. The proof is elementary, but makes use of a representation theorem and fixed-point theorem proved in \cite{Ervin}, as well as some basic facts about ordinals and their arithmetic. 

The work here is an elaboration of the fourth chapter of the author's thesis \cite{Thesis}. 

\section{Preliminaries}

\subsection{Terminology} 

A linear order is a pair $(X, <)$, where $X$ is a set and $<$ is an irreflexive, antisymmetric, and transitive binary relation on $X$. We will always refer to an order $(X, <)$ by its underlying set $X$. The empty order is considered a legitimate linear order. 

We use lowercase Greek letters to denote ordinals. As usual, $\omega$ denotes the first infinite ordinal and $\omega_1$ denotes the first uncountable ordinal. We take $\omega$ to include $0$. Ordinals will play two roles in what follows, as linear orders themselves, and as indexing sets. A \emph{sequence} is a set of points $\{x_i: i < \delta\}$ indexed by an ordinal $\delta$. 

If $X$ is a linear order, a subset $I \subseteq X$ is called an \emph{interval} if for all $x, y, z \in X$, if $x < y < z$ and $x, z \in I$, then $y \in I$. An interval $I$ is an \emph{initial segment} of $X$ if whenever $x \in I$ and $y < x$, then $y \in I$. An interval $J$ is a \emph{final segment} of $X$ if $X \setminus I$ is an initial segment of $X$, or equivalently if whenever $x \in J$ and $y > x$, then $y \in J$. If $I$ is an initial segment of $X$ and $J = X \setminus I$ is the corresponding final segment, the pair $(I, J)$ is called a \emph{cut}. 

If $I$ and $J$ are intervals in $X$, and every point in $I$ is less than every point in $J$, then we write $I < J$. If $C_1 = (I_1, J_1)$ and $C_2 = (I_2, J_2)$ are cuts in $X$, and $I_1$ is a strict initial segment of $I_2$, then the cut $C_1$ falls to the left of $C_2$ and we write $C_1 < C_2$. 

An order $X$ is \emph{dense} if it is infinite and whenever $x, y \in X$ and $x < y$ there is $z \in X$ with $x < z < y$. An order is \emph{scattered} if it contains no dense suborder. All ordinals are scattered. 

A strictly increasing sequence of points $\{x_i: i < \delta\}$ is \emph{cofinal} in $X$ if for every $y \in X$ there is an $i < \delta$ such that $y \leq x$. The \emph{cofinality} of $X$ is the shortest possible length $\lambda$ of a cofinal sequence in $X$. If $X$ has a right endpoint, then the cofinality of $X$ is 1. Otherwise, the cofinality of $X$ is an infinite regular cardinal. Similarly, a strictly decreasing sequence of points is \emph{coinitial} in $X$ if it eventually goes below every point in $X$, and the \emph{coinitiality} of $X$ is the shortest length $\kappa$ of a such sequence in $X$. Like the cofinality, the coinitiality of $X$ is either 1 or infinite and regular. 

If $I \subseteq X$ is an interval, then viewing $I$ as an order itself we may speak of the coinitiality and cofinality of $I$. If $I$ is an initial segment of $X$ and $J = X \setminus I$ is the corresponding final segment, then the cut $(I, J)$ is said to be a $(\kappa, \lambda)$-\emph{cut} if $I$ has cofinality $\kappa$ and $J$ has coinitiality $\lambda$. If $\kappa$ and $\lambda$ are both infinite, the cut $(I, J)$ is called a \emph{gap}. We will use ordinal notation rather than cardinal notation when referring to cuts, writing for example $(\omega, \omega_1)$-cut instead of $(\aleph_0, \aleph_1)$-cut. 

If $X$ is a linear order, then $X^*$ denotes the reverse order. For every point $x \in X$ there is a corresponding point $-x \in X^*$, and we have $x < y$ in $X$ if and only if $-x > -y$ in $X^*$. 

If $X$ and $Y$ are linear orders, their lexicographical product $X \times Y$ is the order obtained by replacing every point in $X$ with a copy of $Y$. Formally, we have $X \times Y = \{(x, y): x \in X, y \in Y\}$, ordered lexicographically. We usually omit the symbol $\times$ and write $XY$ for $X \times Y$. Every $x \in X$ determines an interval in $XY$ of order type $Y$, namely the set of points of the form $(x, \cdot)$ with first coordinate $x$. 

For a fixed $n \in \omega$, we may think of $n$ as a linear order by identifying $n$ with the order $0 < 1 < \ldots < n-1$. Thus, $nX$ denotes the order consisting of $n$ copies of $X$. 

The lexicographical product is associative, in the sense that $(XY)Z \cong X(YZ)$ for all orders $X, Y, Z$. Hence we may unambiguously define longer products as lexicographically ordered sets of tuples. In particular, for an order $X$ and $n \in \omega$, $X^n$ denotes the set of tuples $(x_0, \ldots, x_{n-1})$ of elements of $X$, ordered lexicographically, and $X^{\omega}$ denotes the set of sequences $(x_0, x_1, \ldots)$ of elements of $X$, ordered lexicographically. One may similarly define $X^{\delta}$ for any ordinal $\delta$. These powers obey the exponentiation rule $X^{\delta}X^{\gamma} \cong X^{\delta + \gamma}$, where $+$ is the ordinal sum.

We remark that if $X$ is a linear order without endpoints, or with only a single endpoint, then it is not hard to show that $X^{\omega}$ is dense. If $X$ has both endpoints (and they are distinct), then $X^{\omega}$ is never dense, but always contains a dense suborder. 

The notion of a \emph{replacement} generalizes the notion of a product by allowing points to be substituted by orders of more than one order type. Specifically, if $X$ is a linear order, and for every $x \in X$ we fix an order $I_x$, the replacement $X(I_x)$ is the order obtained by replacing every point $x$ with the corresponding $I_x$. Formally, $X(I_x) = \{(x, i): x \in X, i \in I_x\}$, ordered lexicographically. Every $y \in X$ determines an interval of order type $I_y$ in $X(I_x)$, namely the set of points $(y, \cdot)$ with first coordinate $y$. We allow that for a given $y$ we have $I_y = \emptyset$, and in $X(I_x)$ think of $y$ as being ``replaced by a gap."

We may also define the \emph{sum} of two linear orders. If $X$ and $Y$ are linear orders, $X + Y$ denotes the order obtained by putting a copy of $X$ to the left of a copy of $Y$. Formally, we view $X + Y$ as a replacement of the order $2 = \{0, 1\}$, where the left point is replaced by $X$ and the right point is replaced by $Y$. This operation agrees with the traditional ordinal sum when $X$ and $Y$ are ordinals. 

The notion of a sum is closely related to the notion of a cut. If $(I, J)$ is a cut in $X$, then $X$ is isomorphic to $I+J$. Conversely, if $X$ and $Y$ are orders, then $(X, Y)$ is a cut in $X + Y$. We will informally refer to this cut as ``the cut at the + sign."

One may also define infinite sums of orders. Let $\mathbb{Z}$ denote the integers in their usual order. Given orders $X_i, i \in \mathbb{Z}$, we write $\ldots + X_{-1} + X_0 + X_1 + \ldots$ to denote the replacement $\mathbb{Z}(X_i)$. Similarly, we write $X_0 + X_1 + \ldots$ to denote $\omega(X_i)$ and $\ldots + X_{-1} + X_0$ to denote $\omega^*(X_i)$. It is not hard to check that if the $X_i$ are scattered, then the sums $\mathbb{Z}(X_i)$, $\omega(X_i)$, and $\omega^*(X_i)$ are scattered. In particular, if the $X_i$ are ordinals, these sums are scattered. 

The lexicographical product is right-distributive over the sum, but not left-distributive. That is $(X+Y)Z \cong XZ + YZ$ for all orders $X, Y, Z$, but it is usually not the case that $Z(X+Y) \cong ZX + ZY$. More generally, the product distributes on the right over any replacement: we always have $X(I_x) \times Z \cong X(I_x \times Z)$. In particular, we have right-distributivity over the sums of type $\omega$, $\omega^*$, and $\mathbb{Z}$ described above.

A word of warning on exponential notation: there will be a place in our construction where the ordinal $\omega^{\omega}$ appears. Here, $\omega^{\omega}$ has its traditional meaning as $\sup_{n<\omega}\omega^n$, and not as the set of infinite sequences with entries from $\omega$, as is otherwise our convention in this paper. We will clearly point out when this ordinal appears, to avoid any confusion that might arise from the ambiguity of notation. We also note that while traditional ordinal exponents behave as expected with respect to the \emph{anti}-lexicographical product (the product usually used when studying ordinals), with regard to the lexicographical product there is some awkwardness. Namely, if $\alpha, \gamma, \delta$ are ordinals and if $\alpha^{\gamma}$ and $\alpha^{\delta}$ have their traditional meanings, then $\alpha^{\gamma} \times \alpha^{\delta} = \alpha^{\delta + \gamma}$ (note the reversal in the exponent). As we have noted above, for sets of $\delta$-sequences $X^{\delta}$, exponents behave as expected with respect to the lexicographical ordering. When there is no word to the contrary, $X^{\delta}$ is always assumed to mean the set of $\delta$-length sequences on $X$, ordered lexicographically. 

\subsection{Invariance under left multiplication by a given order}

Let $A$ be a fixed, nonempty linear order. We say that an order $X$ is invariant under left multiplication by $A$ if $AX \cong X$. A simple example of such an order is $X = A^{\omega}$. For, the natural map defined by $(a, (a_0, a_1, \ldots)) \mapsto (a, a_0, a_1, \ldots)$ is a bijection of $A \times A^{\omega}$ with $A^{\omega}$ which is also order-preserving, hence an isomorphism. We will refer to maps of this kind as flattening maps. 

The orders $X$ that are invariant under left multiplication by $A$ are characterized in \cite{Ervin}, and we will need to recall their general form. Before we can write it down, we need some more terminology. We denote the (unordered) set of finite sequences on $A$ as $A^{<\omega}$. This set includes the empty sequence. The length of a finite sequence $r$ is denoted $|r|$. If $r \in A^{<\omega}$ and $u \in A^{\omega}$, then $ru$ denotes the sequence beginning with $r$ and ending with $u$. We do not distinguish between elements of $A$ and sequences in $A^{<\omega}$ of length 1. 

If $u,v \in A^{\omega}$, then $u$ and $v$ are \emph{tail-equivalent} if there exist finite sequences $r, s \in A^{<\omega}$ and an infinite sequence $u' \in A^{\omega}$ such that $u = ru'$ and $v = su'$. If $u$ and $v$ are tail-equivalent we write $u \sim v$. It is easy to check that $\sim$ is an equivalence relation on $A^{\omega}$. It is the finest equivalence relation for which $au$ is equivalent to $u$ for all $a \in A$ and $u \in A^{\omega}$. We denote the tail-equivalence class of $u$ by $[u]$. 

Using tail-equivalence, we can construct many examples of orders $X$ such that $AX \cong X$, as follows. For every $u \in A^{\omega}$, fix an order $I_u$ such that whenever $u \sim v$ we have $I_u = I_v$. (Modulo this restriction the orders may be chosen arbitrarily.) Let $X = A^{\omega}(I_u)$ be the replacement of $A^{\omega}$ by these orders $I_u$. Then we have that $A \times A^{\omega}(I_u) \cong A^{\omega}(I_u)$, that is $AX \cong X$. To see this, note that for a fixed $a \in A$ and $u \in A^{\omega}$, we have in $A \times A^{\omega}(I_u)$ that the interval of points of the form $(a, u, \cdot)$ is of type $I_u$, and in $A^{\omega}(I_u)$ the interval of points $(au, \cdot)$ is of type $I_{au}$. But $I_u$ and $I_{au}$ are equal since $u \sim au$. Hence the flattening map $(a, u, x) \mapsto (au, x)$ is well-defined. Once we know this map is well-defined, it is easy to see that it is an order-isomorphism of $A \times A^{\omega}(I_u)$ with $A^{\omega}(I_u)$, that is, of $AX$ with $X$.  

We will denote orders $X$ of this form by $A^{\omega}(I_{[u]})$, thinking of $I_{[u]}$ as denoting the single order type of all $I_v$ with $v \in [u]$. We say that $X$ is a replacement of $A^{\omega}$ up to tail-equivalence. We have just argued that for such an $X$ we have $AX \cong X$. It turns out the converse is also true: if $AX \cong X$ then $X$ is isomorphic to an order of the form $A^{\omega}(I_{[u]})$. This is Theorem 3.5 of \cite{Ervin}.

In our proof of the main theorem, we will actually be interested in orders that are invariant under left multiplication by \emph{two} factors of $A$, that is, orders $X$ satisfying $A^2X \cong X$. A very similar construction can be used to build such orders, using a finer equivalence relation. 

For $u, v \in A^{\omega}$ we say that $u$ and $v$ are \emph{2-tail-equivalent} and write $u \sim_2 v$ if there exist finite sequences $r, s \in A^{<\omega}$ with $|r| \equiv |s| \pmod2$ and an infinite sequence $u' \in A^{\omega}$ such that $u = ru'$ and $v=su'$. One may check that this defines an equivalence relation. It is the finest equivalence relation for which $abu$ is equivalent to $u$ for all $a, b \in A$ and $u \in A^{\omega}$. The equivalence class of $u$ under this relation is denoted $[u]_2$. 

If $u \sim_2 v$, then $u \sim v$ as well. Hence the 2-tail-equivalence relation refines the tail-equivalence relation. On the other hand, for a given $u \in A^{\omega}$ and $a \in A$, observe that for any $v \in [u]$, either $v \sim_2 u$ or $v \sim_2 au$. Hence $[u] = [u]_2 \cup [au]_2$. For ``most" $u \in A^{\omega}$ we have that $u \not\sim_2 au$, so that the classes $[u]_2$ and $[au]_2$ are disjoint. But for certain $u$ whose entries eventually form repeating blocks, we have $u \sim_2 au$, in which case $[u]_2 = [au]_2 = [u]$. Specifically, we have that $u \sim_2 au$ if and only if there exists $r, s \in A^{<\omega}$ with $|s| \equiv 1 \pmod 2$ such that $u = rsss\ldots$ (see Proposition 3.7 of \cite{Ervin}). We say that such a sequence is eventually periodic, of odd period.

Whereas replacements of $A^{\omega}$ up to tail-equivalence are invariant under left multiplication by $A$, replacements up to 2-tail-equivalence are invariant under left multiplication by $A^2$. More precisely, suppose that for every $u \in A^{\omega}$ we fix an order $I_u$, such that if $u \sim_2 v$ then $I_u = I_v$. Let $X = A^{\omega}(I_u)$. Then $A^2X \cong X$, as witnessed by the map $(a, b, u, x) \mapsto (abu, x)$. This map is well-defined since $I_u = I_{abu}$ for all $a, b \in A, u \in A^{\omega}$. Conversely, it can be shown that any order invariant under left multiplication by $A^2$ is of this form. This is Theorem 3.10 of \cite{Ervin}. We write $X = A^{\omega}(I_{[u]_2})$. 

Replacements of the form $A^{\omega}(I_{[u]_2})$ are ``finer" than replacements of the form $A^{\omega}(I_{[u]})$, in the sense that in replacements of the former type, there may be tail-equivalent sequences $u,v$ which are not 2-tail-equivalent that are replaced by distinct orders. This is reflected in the fact that while an order $X$ that is invariant under left multiplication by $A$ is necessarily invariant under left multiplication by $A^2$, the converse need not hold: there are orders $A$ and $X$ such that $A^2X \cong X$ but $AX \not\cong X$. Indeed, in our proof of the main theorem we will construct such orders. 

If $X$ is isomorphic to $A^2X$, so that $X$ is of the form $A^{\omega}(I_{[u]_2})$, then while it need not be true that $X$ is isomorphic to $AX$, these orders do have closely related representations. Specifically, $AX$ is isomorphic to the order $A^{\omega}(J_{[u]_2})$, where for all $a \in A$ and $u \in A^{\omega}$ we have $J_{[u]_2} = I_{[au]_2}$. It follows that we always have $I_{[u]_2} = J_{[au]_2}$ as well. That is, $AX$ is the order obtained by interchanging the roles of $I_{[u]_2}$ and $I_{[au]_2}$ in the replacement $A^{\omega}(I_{[u]_2})$. This is Proposition 3.11 in \cite{Ervin}.

\subsection{A fixed-point theorem}

The last result we will need for our construction is a fixed-point theorem from \cite{Ervin}. For the remainder of the paper, let $A$ denote the order $\omega_1^* + \omega_1$. For simplicity, we identify the top point of $\omega_1^*$ and the bottom point of $\omega_1$, and write
\[
A = \ldots < -\alpha < \ldots < -\omega < \ldots < -1 < 0 < 1 < \ldots < \omega < \ldots < \alpha < \ldots 
\]

The proof of Theorem 6.1 from \cite{Ervin} establishes the following result.

\theoremstyle{definition}
\newtheorem{t4}[theorem]{Theorem}
\begin{t4}\label{fixedpoint}
Suppose that $f: A^{\omega} \rightarrow A^{\omega}$ is an order-automorphism. Then $f$ has a fixed point of the form $u = (\alpha_0, -\alpha_1, \alpha_2, -\alpha_3, \ldots)$, where for all $i \in \omega$ we have $\alpha_i \neq 0$. 
\end{t4}

The appearance of $\omega_1^* + \omega_1$ in this theorem is not accidental. If $X$ is a countable linear order, then there are always automorphisms of $X^{\omega}$ without fixed points. Even if $X$ is not countable, but has either countable cofinality or coinitiality, then it is usually possible to produce a fixed-point-free automorphism of $X^{\omega}$. In this sense, $A = \omega_1^* + \omega_1$ is the simplest candidate for an order for which we can guarantee that every automorphism $f: A^{\omega} \rightarrow A^{\omega}$ has a fixed point. The theorem tells us that indeed this holds. 

We note that a fixed point $u$ of the form guaranteed by the theorem need not be eventually periodic. But even if $u$ is eventually periodic, its only possible period is 2. Hence by the remarks in Section 2.3, for any $a \in A$ we have that the equivalence classes $[u]_2$ and $[au]_2$ are disjoint. We will use the existence of such a fixed point to show that the orders $X$ and $Y$ we construct in the proof of the main theorem are not isomorphic. 

\section{Proof of the main theorem}

We are now ready to prove the main theorem. First, we give a brief overview of the construction. Sierpi\'nski's Q2 from the introduction asks if there are non-isomorphic orders $X$ and $Y$ such that $X \cong AY$, $Y \cong A'X$ and $X \cong YB$, $Y \cong XB'$ for some orders $A, A', B, B'$. In Section 7 of \cite{Ervin} it is shown that the identities $X \cong AY$ and $Y \cong A'X$ already necessitate that at least one of the orders $A$ and $A'$ is uncountable (so that both $X$ and $Y$ must be uncountable as well). We will prove the stronger statement that there are such orders $X$ and $Y$, but with the left-hand divisors $A$ and $A'$ actually equal (to $\omega_1^* + \omega_1$) and the right-hand divisors $B$ and $B'$ equal as well (to $\omega$). 

It follows from the isomorphisms $X \cong AY$ and $Y \cong AX$ that $X \cong A^2X$. Hence, our $X$ will be of the form $A^{\omega}(I_{[u]_2})$ for some collection of orders $I_{[u]_2}$. As noted in Section 2.2, it must then be that $Y$ will be of the form $A^{\omega}(J_{[u]_2})$, where for every $u \in A^{\omega}$ and $a \in A$ we have $J_{[u]_2} = I_{[au]_2}$. Since for such $X, Y$ we automatically have that $Y \cong AX$ and $X \cong AY$, it remains only to specify the orders $I_{[u]_2}$, show that $X \omega \cong Y$ and $Y \omega \cong X$, and prove $X \not\cong Y$. 

We will build the $I_{[u]_2}$ so that the interchange between $I_{[u]_2}$ and $I_{[au]_2}$ effected by multiplying $X$ on the left by $A$, is also effected by multiplying $X$ on the right by $\omega$. In fact, we will have that $I_{[au]_2} = I_{[u]_2}\omega$ for all $a \in A$ and $u \in A^{\omega}$. From this it follows that $X\omega = A^{\omega}(I_{[u]_2})\omega \cong A^{\omega}(I_{[u]_2}\omega) \cong A^{\omega}(J_{[u]_2}) = Y$, as is our aim. The orders $I_{[u]_2}$ will be $\mathbb{Z}$-sums of \emph{surordinals}: orders all of whose non-trivial final segments are ordinals, but that are not ordinals themselves. 

The final step of the proof is to use Theorem \ref{fixedpoint} to prove that the orders $X$ and $Y$ we construct are non-isomorphic.

\theoremstyle{definition}
\newtheorem{sbbothsides}[theorem]{Theorem}
\begin{sbbothsides} \label{sbbothsides}
There exists a pair of non-isomorphic linear orders that are left-hand and right-hand divisors of one another. Specifically, there exist non-isomorphic orders $X$ and $Y$ that satisfy the four isomorphisms $X \cong AY$, $X \cong Y\omega$, $Y \cong AX$, and $Y \cong X\omega$, where $A=\omega_1^*+\omega_1$. 
\end{sbbothsides}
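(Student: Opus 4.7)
Following the overview, I would take $X = A^\omega(I_{[u]_2})$ and $Y = A^\omega(J_{[u]_2})$ with $J_{[u]_2} = I_{[au]_2}$, for orders $I_{[u]_2}$ indexed by $2$-tail equivalence classes to be chosen. By Theorem 3.10 of \cite{Ervin}, such an $X$ automatically satisfies $A^2 X \cong X$, and by Proposition 3.11 of \cite{Ervin}, $Y \cong AX$. Combined, these give $X \cong A^2X = A(AX) = AY$, so two of the four required isomorphisms are immediate.

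The substance is to choose the $I_{[u]_2}$ so that right multiplication by $\omega$ has the same effect on $X$ as left multiplication by $A$, that is, $I_{[u]_2}\omega \cong I_{[au]_2}$ for all $a\in A$ and $u\in A^\omega$. Once this holds, right distributivity of the lexicographic product over a replacement gives $X\omega \cong A^\omega(I_{[u]_2}\omega) \cong A^\omega(I_{[au]_2}) = Y$, and symmetrically $Y\omega \cong X$ via $I_{[au]_2}\omega \cong I_{[a^2u]_2} = I_{[u]_2}$. The self-consistency of the swap forces $I_{[u]_2}\omega^2 \cong I_{[u]_2}$. For $u$ eventually periodic of odd period, where $[u]_2 = [au]_2$, I would assign an $\omega$-absorbing order such as the ordinal $\omega^\omega$ (noting $\omega^\omega\omega = \omega^{1+\omega} = \omega^\omega$). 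For the remaining classes, after choosing one representative per orbit of the swap $[u]_2 \leftrightarrow [au]_2$, I would assign the two classes of an orbit the orders $I$ and $I\omega$ for a fixed $I$ that is a $\mathbb{Z}$-sum of surordinals and satisfies $I\omega^2 \cong I$ and $I\omega \not\cong I$. The surordinal summands should be arranged so that one $\omega$-multiplication visibly alters each summand's internal cut-type (for instance converting the $(1,1)$-cut between the $\omega^*$-tail and the well-ordered part of a summand $\omega^* + \alpha$ into a $(\omega,1)$-cut in $(\omega^* + \alpha)\omega = \omega^*\omega + \alpha\omega$), while two such multiplications recombine with the $\mathbb{Z}$-indexing to restore the original isomorphism type.

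For $X \not\cong Y$, I would use Theorem \ref{fixedpoint}. Each column $\{u\}\times I_{[u]_2}$ is scattered, since $\mathbb{Z}$-sums of surordinals are scattered, whereas $A^\omega$ is dense because $A$ has neither endpoint. Hence any interval strictly containing a column meets a dense family of other columns, contains a dense suborder, and is not scattered; the columns are thus exactly the maximal scattered intervals of $X$, and analogously of $Y$. Any isomorphism $\phi : X \to Y$ must therefore map columns bijectively onto columns and descend to an order-automorphism $f : A^\omega \to A^\omega$. By Theorem \ref{fixedpoint}, $f$ has a fixed point $u = (\alpha_0, -\alpha_1, \alpha_2, -\alpha_3, \ldots)$ with every $\alpha_i \neq 0$. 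The alternating signs force any period of $u$ to be even, so $u$ is never eventually periodic of odd period, whence $[u]_2 \neq [au]_2$. The restriction of $\phi$ to the column at $u$ gives an isomorphism $I_{[u]_2} \cong J_{[u]_2} = I_{[au]_2}$, contradicting the construction. The chief obstacle is producing an explicit $\mathbb{Z}$-sum of surordinals witnessing $I\omega^2 \cong I$ but $I\omega \not\cong I$; with such an $I$ in hand the rest of the argument is bookkeeping with the representation and fixed-point theorems of \cite{Ervin}.
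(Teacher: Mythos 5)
Your overall architecture is the same as the paper's: take $X = A^{\omega}(I_{[u]_2})$ and $Y = AX$, reduce everything to choosing the $I_{[u]_2}$ so that $I_{[u]_2}\omega \cong I_{[au]_2}$, and then rule out $X \cong Y$ by passing an isomorphism down to an automorphism of $A^{\omega}$ and applying Theorem \ref{fixedpoint}, noting that the alternating-sign fixed point cannot be eventually periodic of odd period, so $[u]_2 \neq [au]_2$ at the fixed point. Your ``maximal scattered interval'' phrasing of the column-matching step is a correct and clean variant of the paper's argument (the paper instead tracks $(\omega,\omega)$-cuts and dense suborders), and assigning $\omega^{\omega}$ to the odd-periodic classes would serve as well as the paper's choice there.

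The genuine gap is the one you flag yourself: you never produce an order $I$ with $I\omega^2 \cong I$ but $I\omega \not\cong I$, and that is where essentially all of the difficulty of the theorem lives --- the reduction you carry out is the overview the paper gives \emph{before} its proof begins. The paper's proof constructs, for each $i \in \mathbb{Z}$, a surordinal $L_i$ (for $n \geq 0$, $L_n = \ldots + 3\omega^{n+3} + 2\omega^{n+2} + \omega^{n+1} + \omega^{\omega}$; for $n \geq 1$, $L_{-n} = \ldots + (n+3)\omega^3 + (n+2)\omega^2 + (n+1)\omega + \omega^{\omega}$), verifies $L_i\omega \cong L_{i+1}$ by right-distributivity and absorption of small initial segments into $\omega^{\omega}$, and then --- this is the real work --- proves the $L_i$ pairwise non-isomorphic by extracting an isomorphism invariant: the tail-equivalence class of the spectrum of a ``ladder'' of cuts (equivalently, by invoking Slater's classification of such $\omega^*$-sums of ordinals). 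Setting $I_{even} = \ldots + L_{-2} + L_0 + L_2 + \ldots$ and $I_{odd} = \ldots + L_{-1} + L_1 + L_3 + \ldots$ then yields $I_{even}\omega \cong I_{odd}$, while $I_{even} \not\cong I_{odd}$ follows because an isomorphism would have to match the $L_i$-blocks (the only $(\omega,\omega)$-cuts occur between blocks), forcing $L_0 \cong L_k$ for some odd $k$. Your sketch --- that one multiplication by $\omega$ ``visibly alters a summand's internal cut-type'' --- does not substitute for this: a change in one chosen presentation proves nothing about non-isomorphism (for instance $\omega^{\omega}\omega$ looks different from $\omega^{\omega}$ as written but is isomorphic to it), so you need an invariant stable under arbitrary isomorphisms, which is exactly what the ladder spectrum or Slater's theorem supplies. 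Until such an $I$ is exhibited and $I\omega \not\cong I$ is actually proved, the proof is incomplete.
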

\begin{proof}

As just noted, we are left only to construct the orders $I_{[u]_2}$, show that $X \omega \cong Y$ and $Y \omega \cong X$, and prove $X \not\cong Y$. 

In what follows, $\omega^{\omega}$ has its traditional meaning as an ordinal, and not as the collection of $\omega$-length sequences on $\omega$. That is, $\omega^{\omega} =  \sup_{n < \omega} \omega^n = \omega + \omega^2 + \omega^3 + \ldots$ The ordinals $\omega^n$ also appear, though in this case there is no ambiguity in the notation, since $\omega^n$ (as an ordinal) is isomorphic to $\omega^n$ (as the collection of $n$-sequences on $\omega$, ordered lexicographically). In several places we will use the fact that if $\alpha$ is a strict initial segment of $\omega^{\omega}$, then $\alpha + \omega^{\omega} \cong \omega^{\omega}$. 

We first define, for every $i \in \mathbb{Z}$, an order $L_i$. For the non-negative integers, we let
\[
\begin{array}{l c l}
L_0 & = & \ldots + 3\omega^3 + 2\omega^2 + \omega + \omega^{\omega} \\
L_1 & = & \ldots + 3\omega^4+ 2\omega^3 + \omega^2 + \omega^{\omega} \\
& \vdots & \\
L_n & = & \ldots + 3\omega^{n+3} + 2\omega^{n+2} + \omega^{n+1} + \omega^{\omega} \\
& \vdots & 
\end{array}
\]

On the other side, let
\[
\begin{array}{l c l}
L_{-1} & = & \ldots + 4\omega^3 + 3\omega^2 + 2\omega + \omega^{\omega} \\
L_{-2} & = & \ldots + 5\omega^3 + 4\omega^2 + 3\omega + \omega^{\omega} \\
& \vdots & \\
L_{-n} & = & \ldots (n+3)\omega^3 + (n+2)\omega^2 + (n+1)\omega + \omega^{\omega} \\
& \vdots & 
\end{array}
\]

Observe that each of these orders is scattered, since they are $\omega^*$-sums of ordinals. We claim that for $i, j \in \mathbb{Z}$ with $i \neq j$ we have $L_i \not\cong L_j$. This follows from more general results due to Jullien \cite{Jullien} and independently Slater \cite{Slater}. We will give a proof using Slater's results below. However, for the sake of completeness, we sketch the argument that $L_0 \not\cong L_1$, since it is not too difficult and contains the essential ideas needed to prove more generally that the $L_i$ are pairwise non-isomorphic. 

For this, we introduce some local terminology. Suppose that $C = (I, J)$ is a cut in some linear order $L$. We say that $C$ has \emph{type} $n$ (where $n \in \omega$), and write $tp(C) = n$, if $I$ has a final segment isomorphic to $\omega^n$. ($C$ has type $0$ if $I$ has a top point.) In general, a cut $C$ need not have a type, but if $tp(C) = n$ then $C$ is not of type $m$ for any $m \neq n$. This is because, for each fixed $n$, the ordinal $\omega^n$ has the property that all of its nonempty final segments are isomorphic to $\omega^n$, and $\omega^n \not\cong \omega^m$ for $n \neq m$. Observe that in each $L_i$ defined above, if $C = (I, J)$ is a cut with neither $I$ nor $J$ empty, then $C$ has a type. 

Now, we have that $L_0 = \ldots + 3\omega^3 + 2\omega^2 + \omega + \omega^{\omega}$ and $L_1 = \ldots + 3\omega^4+ 2\omega^3 + \omega^2 + \omega^{\omega}$. Let us write these orders as longer sums by separating, for every $k \in \omega$, the copies of $\omega^k$ appearing in these representations. We write
\[
L_0 = \ldots + \omega^3 + \omega^3 + \omega^3 + \omega^2 + \omega^2 + \omega + \omega^{\omega},
\]
\[
L_1 = \ldots + \omega^4 + \omega^4 + \omega^4 + \omega^3 + \omega^3 + \omega^2 + \omega^{\omega}.
\]
The coinitial sequences of cuts at the + signs in these representations possess a particular recursive property that will allow us to distinguish these orders, as we now show.

Suppose that $L$ is a linear order in which every cut has a type. Suppose that $\ldots < C_2 < C_1 < C_0$ is a decreasing sequence of cuts in $L$, which we denote by $(C_k)_{k\in \omega}$, or simply $(C_k)$. We call such a sequence a \emph{ladder} if 
\begin{itemize} 
\item[(1)] $(C_k)_{k\in \omega}$ is coinitial in $L$,
\item[(2)] for all $k \in \omega $, we have $tp(C_{k+1}) \geq tp(C_k)$,
\item[(3)] for all  $k \in \omega$, if $D$ is a cut with $C_{k+1} < D < C_k$, then $tp(D) < tp(C_k)$. 
\end{itemize} 
Conditions (2) and (3) say that $C_{k+1}$ is the rightmost cut to the left of $C_k$ whose type is at least as large as the type of $C_k$. 

In a given order $L$, a ladder need not exist. However, if a ladder does exist, then it is an invariant of the order, in the sense that any two ladders eventually coalesce. That is, if $(C_k)$ and $(D_l)$ are ladders in $L$, then there exist $k_0, l_0 \in \omega$ such that for all $n \in \omega$ we have  $C_{k_0+n} = D_{l_0+n}$. 

To see this, let us first show that there must exist $k_0, l_0$ such that $C_{k_0} = D_{l_0}$. If not, the ladders are disjoint. But then, since they are both coinitial in $L$, there must be two consecutive cuts $C_{i+1} < C_i$ strictly between which lies at least one $D$-cut. Let $D_j$ be the leftmost such cut, so that $C_{i+1} < D_j < C_i$ and $D_{j+1} \leq C_{i+1}$. Then actually we must have have $D_{i+1} < C_{j+1}$ since the ladders are disjoint, so that $D_{j+1} < C_{i+1} < D_j < C_i$. Now, from conditions $(2)$ and $(3)$ in the definition of ladder, and from the inequality $C_{i+1} < D_j < C_i$, we obtain $tp(D_j) < tp(C_{i+1})$. But from condition (2) and $D_{j+1} < C_{i+1} < D_j$ we obtain $tp(C_{i+1}) < tp(D_j)$, a contradiction. 

Hence for some $k_0, l_0$ we have $C_{k_0} = D_{l_0}$.  But then it must be that $C_{k_0+1} = D_{l_0+1}$, since these are each the rightmost cuts to the left of $C_{k_0} = D_{l_0}$ of at least as large a type. Inductively we obtain $C_{k_0+n} = D_{l_0+n}$ for every $n$, as claimed.

We can express this fact in a different way. Fix a sequence $u = (n_1, n_2, \ldots)$, where $n_k \in \omega$ for all $k$. We say that a given ladder $(C_k)$ has \emph{spectrum} $u$ if $tp(C_k) = n_k$ for all $k \in \omega$. Suppose that in a given order, $(C_k)$ is a ladder with spectrum $v$ and $(D_l)$ is a ladder with spectrum $v'$. Then by what we have just proved, we must have that $v$ and $v'$ are tail-equivalent. (The appearance of tail-equivalence here is coincidental, and has nothing to do with its appearance elsewhere in this paper.)

Finally, returning to our context, we again write $L_0$ and $L_1$ as above:
\[
L_0 = \ldots + \omega^3 + \omega^3 + \omega^3 + \omega^2 + \omega^2 + \omega + \omega^{\omega},
\]
\[
L_1 = \ldots + \omega^4 + \omega^4 + \omega^4 + \omega^3 + \omega^3 + \omega^2 + \omega^{\omega}.
\]

The cuts at the + signs in this representation of $L_0$ form a ladder with spectrum $v = (1, 2, 2, 3, 3, 3, \ldots)$, and the cuts at the $+$ signs in $L_1$ form a ladder with spectrum $v'=(2, 3, 3, 4, 4, 4, \ldots)$. If $L_0$ and $L_1$ were isomorphic, then any ladder in $L_0$ and any ladder in $L_1$ would have tail-equivalent spectra. But $v \not\sim v'$, and so it must be that these orders are not isomorphic.  

Slater proves a more general result, and we argue from his paper to show that $L_i \not\cong L_j$ whenever $i \neq j$. Suppose that we have orders $L$ and $M$ such that 
\[
\begin{array}{lll}
L & = & \ldots + l_2\omega^{k_2} + l_1\omega^{k_1} + \omega^{\omega} \\
M & = & \ldots + l_2'\omega^{k_2'} + l_1'\omega^{k_1'} + \omega^{\omega},
\end{array}
\]
where the $l_n, l_n', k_n, k_n'$ are all positive integers, and furthermore $k_1 < k_2 < \ldots$ and $k_1' < k_2' < \ldots$ are strictly increasing sequences. In the terminology of Slater's paper, $L$ and $M$ are $RJ$ types of type 4 (see Theorem 2 of \cite{Slater}). By Theorem 4 of \cite{Slater}, if $L \cong M$, then there exists an $r \geq 0$ and $N$, such that for every $n \geq N$, we either have that $k_n' = k_{n+r}$ and $l_n' = l_{n+r}$, or we have that $k_n = k_{n+r}'$ and $l_n = l_{n+r}'$. That is, for $L$ and $M$ to be isomorphic, it is necessary that the coefficients $l_n, l_m'$ and exponents $k_n, k_m'$ eventually agree, up to some shift of index. 

If we compare $L_i$ and $L_j$ for $i \neq j$, we see that while these orders are of the same form as $L$ and $M$, they do not satisfy the condition necessary for their isomorphism. Hence $L_i \not\cong L_j$, as claimed. 

Though they are pairwise non-isomorphic, the $L_i$ are all closely related. Namely, we claim $L_i \omega = L_{i+1}$ for all $i \in \mathbb{Z}$. There are three cases to verify. If $i \geq 0$, we have
\[
\begin{array}{lll}
L_i \omega & = & (\ldots + 2\omega^{i+2} + \omega^{i+1} + \omega^{\omega})\omega \\
& \cong & \ldots + 2\omega^{i+2}\omega + \omega^{i+1}\omega + \omega^{\omega}\omega \\
& \cong & \ldots + 2\omega^{i+3} + \omega^{i+2} + \omega^{\omega} \\
& \cong & \ldots + 2\omega^{(i+1)+2} + \omega^{(i+1)+1} + \omega^{\omega} \\
& = & L_{i+1},
\end{array}
\]
where, in going from the second to third line, we have used the fact that $\omega^{\omega} \omega \cong \omega^{1+\omega}$ (reversing the exponent, as noted in Section 2.2) $\cong \omega^{\omega}$. 

For $i = -1$ we have
\[
\begin{array}{lll}
L_{-1}\omega & = & (\ldots + 3\omega^2 + 2\omega + \omega^{\omega}) \omega \\
& \cong & \ldots + 3\omega^3 + 2\omega^2 + \omega^{\omega} \\
& \cong & \ldots + 3\omega^3 + 2\omega^2 + \omega + \omega^{\omega} \\
& = & L_0.
\end{array}
\]
where, in going from the second to third line, we have used that $\omega + \omega^{\omega} \cong \omega^{\omega}$. Similarly, if $i < -1$, so that $i = -n$ for some $n > 1$ we have
\[
\begin{array}{lll}
L_i \omega & = & L_{-n}\omega \\
& = & (\ldots + (n+2)\omega^2 + (n+1)\omega + \omega^{\omega}) \omega \\
& \cong & \ldots + (n+2)\omega^3 + (n+1) \omega^2 + \omega^{\omega} \\
& \cong & \ldots + ((n-1)+3)\omega^3 + ((n-1)+2)\omega^2 + ((n-1)+1)\omega + \omega^{\omega} \\
& = & L_{-(n-1)} \\
& = & L_{i+1},
\end{array}
\]
where, in going from the third to fourth line, we use that $((n-1) + 1)\omega + \omega^{\omega} \cong \omega^{\omega}$. Hence $L_i\omega = L_{i+1}$ in all cases, as claimed. 

We are now almost ready to define the orders $I_{[u]_2}$ that will appear in the replacement $X = A^{\omega}(I_{[u]_2})$. These orders will each be one of the three orders $I_{even}, I_{odd}$, and $I$, defined as follows:
\[
\begin{array}{lll}
I_{even} & = & \ldots + L_{-2} + L_0 + L_2 + \ldots \\
I_{odd} & = & \ldots + L_{-1} + L_1 + L_3 + \ldots \\
I & = & \ldots + L_{-1} + L_0 + L_1 + \ldots
\end{array}
\]

Before defining the $I_{[u]_2}$, we prove that these three orders are pairwise non-isomorphic. Note first that for a given $i$, every cut in $L_i$ is either a $(1, 1)$-cut or $(\omega, 1)$-cut. The only cuts in the orders $I_{even}, I_{odd}$, and $I$ that do not fall in the midst of an $L_i$ occur at the $+$ signs in the above representations, and these cuts are $(\omega, \omega)$-cuts. Hence these are the only $(\omega, \omega)$-cuts appearing in these orders. 

Now suppose, for example, that there exists an isomorphism $f: I_{even} \rightarrow I_{odd}$. It must be, then, that $f[L_0] \subseteq L_k$ for some odd integer $k$. This is because $f[L_0]$ is an interval in $I_{odd}$, and every interval in $I_{odd}$ is either a subinterval of some $L_k$ or contains an $(\omega, \omega)$-gap. It cannot be that $f[L_0]$ contains an $(\omega, \omega)$-gap, since $L_0$ does not. But then we must actually have $f[L_0] = L_k$, since by a symmetric argument $f^{-1}[L_k]$ must be a subinterval of $L_m$ for some even integer $m$, and the only possible $m$ is $m=0$. 

This is a contradiction. It cannot be that $f[L_0] = L_k$ since this would mean that the orders $L_0$ and $L_k$ are isomorphic. But $L_0$ is never isomorphic to $L_k$ for $k$ odd. Hence $I_{even} \not\cong I_{odd}$. By similar arguments, $I_{even} \not\cong I$ and $I_{odd} \not\cong I$, as claimed. 

However, it is easy to see that we have $I_{even}\omega \cong I_{odd}$, $I_{odd}\omega \cong I_{even}$, and $I\omega \cong I$. For example, to verify the first isomorphism, we check
\[
\begin{array}{lll}
I_{even}\omega & = & (\ldots + L_{-2} + L_0 + L_2 + \ldots)\omega \\
& \cong & + \ldots + L_{-2}\omega + L_0\omega + L_2\omega + \ldots \\
& \cong & + \ldots + L_{-1} + L_1 + L_3 + \ldots \\ 
& = & I_{odd},
\end{array}
\]
and similarly for the other two isomorphisms. It follows that all three orders are invariant under right multiplication by $\omega^2$, that is $I_{even}\omega^2 \cong I_{even}, I_{odd}\omega^2 \cong I_{odd}$, and $I\omega^2 \cong I$. 

Now we can define the $I_{[u]_2}$. For every tail-equivalence class $C \subseteq A^{\omega}$, fix a representative $u_C$ (so that $C = [u_C]$). There are two cases. If $u_C \not\sim_2 au_C$, so that $[u_C]_2 \cap [au_C]_2 = \emptyset$, we let $I_{[u_C]_2} = I_{even}$ and $I_{[au_C]_2} = I_{odd}$. If $u_C \sim_2 au_C$, so that $[u_C]_2 = [au_C]_2 = [u_C]$, we let $I_{[u_C]_2} = I$. Then by above, we have that $I_{[u]_2}\omega \cong J_{[u]_2}$ for all $u \in A^{\omega}$. Depending on the $u$, this is just the isomorphism $I_{even}\omega \cong I_{odd}$, $I_{odd}\omega \cong I_{even}$, or $I\omega \cong I$. 

Let $X = A^{\omega}(I_{[u]_2})$, and let $Y = AX$. Then $Y \cong A^{\omega}(J_{[u]_2})$, where for every $u \in A^{\omega}$ and $a \in A$ we have $J_{[u]_2} = I_{[au]_2}$. From our remarks in Section 2.2, it is automatic that $X \cong AY$. We claim that these orders have the remaining desired properties, namely, that $X \cong Y\omega$, $Y \cong X\omega$, and $X \not\cong Y$. 

The first two are easy to verify. First, we have
\[
\begin{array}{lll}
X \omega & = & A^{\omega}(I_{[u]_2})\omega \\
& \cong & A^{\omega}(I_{[u]_2}\omega) \\
& \cong & A^{\omega}(J_{[u]_2}) \\
& = & Y.
\end{array}
\]
Likewise we may show $Y\omega \cong X$.

So it remains to prove $X \not \cong Y$. First note that since $A$ has no endpoints, the order $A^{\omega}$ is dense. Thus every interval of $A^{\omega}$ is dense. It follows that, in general, if $A^{\omega}(M_u)$ and $A^{\omega}(N_u)$ are replacements of $A^{\omega}$ with none of the $M_u, N_u$ empty, and $g: A^{\omega}(M_u) \rightarrow A^{\omega}(N_u)$ is an isomorphism, then for a given $u$ we must have that either $f[M_u] \subseteq N_v$ for some $v$, or that $f[M_u]$ (and hence $M_u$) contains a dense suborder. (The ``or" here is non-exclusive.)

Now, suppose that $f: X \rightarrow Y$ is an isomorphism. We view $f$ as an isomorphism of $A^{\omega}(I_{[u]_2})$ with $A^{\omega}(J_{[u]_2})$. None of the orders $L_k$ contains a dense suborder, and so neither do the $I_{[u]_2}$, $J_{[u]_2}$, as these are just $\mathbb{Z}$-sums of the $L_k$. By our observation above, it must be that for every $u \in A^{\omega}$ there is a $v$ such that $f[I_u] \subseteq J_v$. Conversely, for every $v \in A^{\omega}$ there must be a $u$ such that $f^{-1}[J_v] \subseteq I_u$. Combining these observations gives that in fact for every $u$ there is a $v$ such that $f[I_u] = J_v$. In particular, for such a pair $u, v$ we have that $I_u \cong J_v$ as linear orders. We will assume for convenience that $f$ is actually the identity on each $I_u$, that is, that $f((u, x)) = (v, x)$, since if $f$ ever acts non-trivially on the right coordinates we can replace $f$ with another isomorphism that does not, but still sends $I_u$ onto $J_v$. 

This means there is an automorphism $g: A^{\omega} \rightarrow A^{\omega}$ such that for every $u \in A^{\omega}$ and $x \in I_u$ we have $f((u, x)) = (g(u), x)$. By Theorem \ref{fixedpoint}, the automorphism $g$ has a fixed point $u = (\alpha_0, -\alpha_1, \alpha_2, -\alpha_3, \ldots)$, where the $\alpha_i$ are non-zero ordinals in $\omega_1$. For such a $u$ we have $u \not\sim_2 au$, and hence $I_{[u]_2} \not\cong I_{[au]_2}$: one of these orders is $I_{even}$, and the other is $I_{odd}$. Thus one of $I_u, J_u$ is $I_{even}$ and the other is $I_{odd}$. But then since $g$ fixes $u$ it must be that $f[I_u] = J_u$, a contradiction, as these orders are non-isomorphic. Hence $X \not\cong Y$, and the theorem follows. 
\end{proof}

\end{document}